\title[Galois correspondence]{Galois correspondence for Galois coverings on tropical curves}
\author{Song JuAe}
\address{Tokyo Metropolitan University, 1-1 Minami-Ohsawa, Hachioji, Tokyo, 192-0397, Japan.}
\email{song-juae@ed.tmu.ac.jp}
\subjclass[2020]{Primary 14T15; Secondary 14T20}
\keywords{Galois correspondence for Galois coverings on tropical curves, universal mapping property}
\newtheorem{dfn}{Definition}[section]
\newtheorem{thm}[dfn]{Theorem}
\newtheorem{prop}[dfn]{Proposition}
\newtheorem{lemma}[dfn]{Lemma}
\newtheorem{rem}[dfn]{Remark}
\newtheorem{ex}[dfn]{Example}
\def\Gamma{\varGamma}
\begin{document}

\maketitle

\begin{abstract}
We define Galois coverings on tropical curves for which a Galois correspondence and a universal mapping property hold.
\end{abstract}

\section{Introduction}

Let $\varphi : \Gamma \to \Gamma^{\prime}$ be a map between tropical curves and $G$ a finite group.
For an isometric action of $G$ on $\Gamma$, $\varphi$ is \textit{Galois} if (1) $\varphi$ is a finite harmonic morphism, (2) the order of $G$ coincides with the degree of $\varphi$, (3) the action of $G$ on $\Gamma$ induces a transitive action on every fiber, and (4) every stabilizer subgroup of $G$ with respect to all but a finite number of points is trivial.
We also say $\varphi$ to be \textit{$G$-Galois}.
Here, a tropical curve is a metric graph that may have edges of length $\infty$, and a finite harmonic morphism is defined as a morphism of our category of tropical curves (see Section \ref{section2} for more details).
When $\varphi$ is $G$-Galois, for a finite harmonic morphism $\psi : \Gamma \to \Gamma^{\prime \prime}$, we define $G(\psi)$ as the subset of $G$ consisting of all elements $g$ of $G$ satisfying $\psi \circ g = \psi$.
For a fixed $G$-Galois covering $\varphi : \Gamma \to \Gamma^{\prime}$, let $A^{\prime}$ be the set of all finite haromonic morphisms $\widetilde{\psi} : \Gamma \to \widetilde{\Gamma^{\prime \prime}}$ each of which is $G(\widetilde{\psi})$-Galois.
For two elements $\psi_1$ and $\psi_2$ of $A^{\prime}$, we write $\psi_1 \sim \psi_2$ if there exists a finite harmonic morphism of degree one $\psi_{12}$ satisfying $\psi_1 = \psi_{12} \circ \psi_2$.
This $\sim$ becomes an equivalence relation.
Let $A := A^{\prime} /\!\! \sim$.
For $[\psi_i] \in A$, where $[\psi_i]$ denotes the equivalence class of $\psi_i$, we write $[\psi_1] \le_A [\psi_2]$ if for any $\psi^{\prime}_1 \in [\psi_1]$ and $\psi^{\prime}_2 \in [\psi_2]$, there exists a finite harmonic morphism $\theta$ satisfying $\psi^{\prime}_1 = \theta \circ \psi^{\prime}_2$.
This $\le_A$ is well-defined and becomes a partial order.
Let $B$ be the set of all subgroups of $G$.
For $G_i \in B$, we define a partial order $\le_B$ such that $G_1 \le_B G_2$ if $G_1 \subset G_2$.
Then, we have the following:

\begin{thm}[Galois correspondence for Galois coverings]
	\label{thm:Galois correspondence1}
In the above setting, there exists a one-to-one correspondence between $(A, \le_A)$ and $(B, \le_B)$ reversing (partial) orders.
\end{thm}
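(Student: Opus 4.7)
The plan is to mimic the classical Galois correspondence from field theory by constructing explicit quotient tropical curves. I would define a map $\Phi : A \to B$ by $\Phi([\widetilde{\psi}]) := G(\widetilde{\psi})$ and a map $\Psi : B \to A$ by $\Psi(H) := [\pi_H]$, where $\pi_H : \Gamma \to \Gamma / H$ is the quotient of $\Gamma$ by the restricted isometric action of $H \le G$. The substance of the proof then consists in verifying that these assignments are well defined on equivalence classes, mutually inverse, and order-reversing.

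First I would check that $\Phi$ descends to $A$: if $\psi_1 = \psi_{12} \circ \psi_2$ with $\deg(\psi_{12}) = 1$, then $\psi_{12}$ is an isomorphism in the category of tropical curves, so $\psi_1 \circ g = \psi_1$ is equivalent to $\psi_2 \circ g = \psi_2$, giving $G(\psi_1) = G(\psi_2)$. Next I would construct the quotient $\Gamma / H$: since $H$ acts isometrically on $\Gamma$, the orbit space carries the quotient pseudometric, which is a genuine metric because the generic $G$-stabilizer is trivial, and $\Gamma/H$ inherits the structure of a tropical curve for which $\pi_H$ becomes a finite harmonic morphism of degree $|H|$. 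Conditions (1)--(4) of the Galois definition can then be verified directly for $\pi_H$, so $[\pi_H] \in A$. Computing stabilizers at a generic point of $\Gamma$ yields $G(\pi_H) = H$, and hence $\Phi \circ \Psi = \mathrm{id}_B$.

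For $\Psi \circ \Phi = \mathrm{id}_A$, I would establish a universal mapping property: for any $\widetilde{\psi} \in A'$ the morphism $\widetilde{\psi}$ factors as $\widetilde{\psi} = \theta \circ \pi_{G(\widetilde{\psi})}$ with $\theta$ a finite harmonic morphism, the map $\theta$ being forced by the fact that $\widetilde{\psi}$ is constant on each $G(\widetilde{\psi})$-orbit. Since both $\widetilde{\psi}$ and $\pi_{G(\widetilde{\psi})}$ are $G(\widetilde{\psi})$-Galois of the same degree $|G(\widetilde{\psi})|$, multiplicativity of degrees forces $\deg(\theta) = 1$, hence $\widetilde{\psi} \sim \pi_{G(\widetilde{\psi})}$. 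Order-reversal then follows because $H_1 \subseteq H_2$ is equivalent to every $H_1$-orbit lying inside an $H_2$-orbit, which is in turn equivalent to the existence of a factorisation $\pi_{H_2} = \theta \circ \pi_{H_1}$, i.e.\ $[\pi_{H_2}] \le_A [\pi_{H_1}]$; this also shows en route that $\le_A$ is well defined on $A$.

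The main obstacle will be the careful construction of the quotient $\Gamma / H$ within the paper's category of tropical curves: one must treat fixed points of subgroups of $H$ (including points at infinity on edges of length $\infty$) with care, understand how they reshape the edge lengths and valencies of $\Gamma / H$, and exhibit the local degrees witnessing harmonicity of $\pi_H$. Once these geometric foundations and the universal mapping property are in hand, the remaining verifications are the formal bookkeeping familiar from the classical Galois correspondence.
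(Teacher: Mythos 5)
Your proposal is correct and follows essentially the same route as the paper: the same maps $\Phi([\psi]) = G(\psi)$ and $\Psi(H) = [\pi_H]$, the same use of generic triviality of stabilizers to get $G(\pi_H) = H$, and the same reduction of $\Psi \circ \Phi = \operatorname{id}_A$ to the factorization $\psi \sim \pi_{G(\psi)}$ (which in the paper is immediate from the definition of $G(\psi)$-Galois rather than via a degree count). The one caveat is that what you dismiss as ``formal bookkeeping'' in the order-reversal step is where the paper does its only real computation: showing that the induced map $\theta : \Gamma/G_1 \to \Gamma/G_2$ for $G_1 \subset G_2$ is not merely continuous but a finite \emph{harmonic} morphism, via an orbit--stabilizer count giving $\sum_{e'} \operatorname{deg}_{e'}(\theta) = |{G_2}_v|/|{G_1}_v|$ independently of the target edge.
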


For Galois coverings, the following two theorems hold.
Here, for a map between tropical curves $\varphi : \Gamma \to \Gamma^{\prime}$ and an isometric action of a finite group $H$ on $\Gamma$, $\varphi$ is \textit{$H$-normal} if $\varphi$ satisfies the conditions (1), (3), and (4) above.

\begin{thm}
	\label{thm:intermediate1}
Let $\varphi : \Gamma \to \Gamma^{\prime}$ be a $G$-Galois covering, $\psi : \Gamma \to \Gamma^{\prime \prime}$ in $A^{\prime}$, and $\theta : \Gamma^{\prime \prime} \to \Gamma^{\prime}$ the unique continuous map satisfying $\varphi = \theta \circ \psi$.
Then, the following are equivelent:

$(1)$ $G(\psi)$ is a normal subgroup of $G$,

$(2)$ there exists a finite group $H$ isometrically acting on $\Gamma^{\prime \prime}$ such that $\theta$ is $H$-Galois, and

$(3)$ there exists a finite group $H^{\prime}$ isometrically acting on $\Gamma^{\prime \prime}$ such that $\theta$ is $H^{\prime}$-normal.

Moreover, suppose in addition to $(2)$ that for any $g \in G$, there exists $h \in H$ satisfying $\psi \circ g = h \circ \psi$, then $H$ is isomorphic to the quotient group $G / G(\psi)$.
\end{thm}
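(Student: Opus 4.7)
My plan is to establish the cycle $(2) \Rightarrow (3) \Rightarrow (1) \Rightarrow (2)$ and then treat the ``moreover'' clause separately. The first step $(2) \Rightarrow (3)$ is immediate: the defining conditions for $H$-normality are a subset of those for $H$-Galois, so taking $H' := H$ works.

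For $(3) \Rightarrow (1)$, the plan is to descend each $g \in G$ to an element of $H'$. For any $x \in \Gamma$, since $\theta \circ \psi \circ g = \varphi \circ g = \varphi = \theta \circ \psi$, the points $\psi(gx)$ and $\psi(x)$ lie in the same $\theta$-fiber, so by $H'$-transitivity there exists $h' \in H'$ with $\psi(gx) = h' \psi(x)$. For $x$ lying outside the finite exceptional set (where the $G$-stabilizer of $x$ or the $H'$-stabilizer of $\psi(x)$ fails to be trivial), this $h'$ is unique. Continuity of $\psi$, of $g$, and of the $H'$-action, combined with the finiteness of $H'$, forces $h'$ to be locally constant, and then connectedness of $\Gamma$ (minus the exceptional locus) pins down a single element $\Phi(g) \in H'$ independent of $x$ with $\psi \circ g = \Phi(g) \circ \psi$. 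The resulting assignment $\Phi : G \to H'$ is a group homomorphism by the computation $\psi \circ (g_1 g_2) = \Phi(g_1) \circ \Phi(g_2) \circ \psi$, and its kernel is precisely $G(\psi)$ by definition; hence $G(\psi) \triangleleft G$.

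For $(1) \Rightarrow (2)$, set $H := G/G(\psi)$ and define an action on $\Gamma''$ by $\bar{g} \cdot \psi(x) := \psi(gx)$. Well-definedness in $x$ uses normality: if $\psi(x) = \psi(x')$ then $x' = kx$ for some $k \in G(\psi)$ by transitivity of $G(\psi)$ on fibers of $\psi$, so $\psi(gx') = \psi((g k g^{-1}) gx) = \psi(gx)$ since $g k g^{-1} \in G(\psi)$; independence of coset representative is analogous. It remains to verify the four Galois conditions for $\theta$ with respect to $H$. Finite harmonicity of $\theta$ should follow from that of $\varphi = \theta \circ \psi$ and $\psi$ via a basic lemma of the paper's framework; the degree equality $|H| = |G|/|G(\psi)| = \deg \varphi / \deg \psi = \deg \theta$ uses multiplicativity of degree; transitivity of $H$ on $\theta$-fibers follows by lifting preimages through $\psi$ and invoking $G$-transitivity on $\varphi$-fibers; and generic triviality of $H$-stabilizers descends from that of $G$ on $\Gamma$.

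For the ``moreover'' clause, the added hypothesis defines a map $\Phi : G \to H$, $g \mapsto h$; uniqueness of $h$ follows from the surjectivity of $\psi$ onto $\Gamma''$. The same associativity computation as above shows $\Phi$ is a homomorphism, and $\ker \Phi = G(\psi)$ by construction, so we obtain an injection $G/G(\psi) \hookrightarrow H$. The order comparison $|H| = \deg \theta = |G|/|G(\psi)|$ then promotes this injection to an isomorphism. The main obstacle I anticipate lies in the descent step of $(3) \Rightarrow (1)$: one must formalize the local-constancy-plus-connectedness argument in the tropical setting so that the choice of $h'$ propagates to a single group element across vertices and across the finite exceptional set, after which the remaining implications are largely bookkeeping.
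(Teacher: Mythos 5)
Your steps $(2)\Rightarrow(3)$ and $(1)\Rightarrow(2)$, and your treatment of the ``moreover'' clause, follow the paper's route in Proposition \ref{prop:Galois correspondence1}: the action $[g]\cdot\psi(x):=\psi(g(x))$, its well-definedness via normality and transitivity of $G(\psi)$ on $\psi$-fibers, the degree count $|H|=\operatorname{deg}(\theta)=|G|/|G(\psi)|$, and the injection $G/G(\psi)\hookrightarrow H$ via faithfulness of the induced action are exactly the paper's arguments; the harmonicity of $\theta$ that you defer to ``a basic lemma'' is supplied by the last part of the proof of Theorem \ref{thm:Galois correspondence1} with $G_1=G(\psi)$, $G_2=G$.

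The genuine gap is in $(3)\Rightarrow(1)$. You claim that for each $g\in G$ there is a \emph{single} $h'\in H'$ with $\psi\circ g=h'\circ\psi$ on all of $\Gamma$, obtained by uniqueness on a generic locus plus local constancy plus connectedness. But the locus where the pointwise choice of $h'$ is unique is $\Gamma$ minus the $\psi$-preimage of a finite set, and deleting finitely many points (which include branch vertices) disconnects a metric graph in general; your closed-and-open argument gives a constant choice on each component, and nothing forces these choices to agree across the exceptional points. The paper signals that this failure is real: Proposition \ref{prop:prenormal} yields only edgewise agreement $(\psi\circ g)|_{e}=(h\circ\psi)|_{e}$ with $h\in H'$ depending on the edge, and the remark following it stresses that, unlike the field-theoretic analogue, one ``must consider only an edge instead of the whole tropical curve.'' Indeed, if a single $h'\in H'$ always existed, Lemma \ref{lem:prenormal} would be pointless; that lemma exists precisely because the given $H'$ may violate your compatibility condition and must be replaced. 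The paper's repair is to define, for each $g$, a new map $h'_g:\psi(x)\mapsto\psi(g(x))$ on $\Gamma''$, prove it is a well-defined automorphism because it agrees edgewise with elements of $H'$ (Proposition \ref{prop:prenormal}), take $H''$ to be the group generated by all $h'_g$, check that the $H''$-orbits coincide with the $H'$-orbits so that $\theta$ is still $H''$-normal and now satisfies $\psi\circ g=h'_g\circ\psi$, and only then run the conjugation computation $\psi\circ(g^{-1}g'g)=\psi$ to conclude $G(\psi)\triangleleft G$. Without this detour (or an independent argument that the choices cannot jump across the exceptional set), your descent step does not close.
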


\begin{thm}[Universal mapping property]
	\label{thm:UMP}
Let $\varphi : \Gamma \to \Gamma^{\prime}$ be $G$-Galois.
Then, for any finite harmonic morphism $\psi : \Gamma \to \Gamma^{\prime \prime}$ satisfying for any $g \in G$, $\psi \circ g = \psi$, there exists a unique finite harmonic morphism $\theta$ satisfying $\psi = \theta \circ \varphi$.
\end{thm}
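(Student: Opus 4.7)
My plan is to construct $\theta$ as the descent of $\psi$ along the $G$-action on $\Gamma$ and then verify its properties in three stages: a set-theoretic construction, continuity, and the tropical morphism structure.

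Uniqueness is immediate: if $\psi = \theta_i \circ \varphi$ for $i = 1, 2$, then $\theta_1$ and $\theta_2$ agree pointwise on the image $\varphi(\Gamma)$, which equals $\Gamma^{\prime}$ by surjectivity of the Galois covering. For existence, note that $G$ acts transitively on every fiber of $\varphi$, so any $x_1, x_2 \in \varphi^{-1}(y)$ satisfy $x_2 = g x_1$ for some $g \in G$, and then $\psi(x_2) = \psi(g x_1) = \psi(x_1)$ by hypothesis. Setting $\theta(y) := \psi(x)$ for any $x \in \varphi^{-1}(y)$ therefore yields a well-defined set-theoretic map $\theta : \Gamma^{\prime} \to \Gamma^{\prime \prime}$ with $\psi = \theta \circ \varphi$.

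Continuity I would obtain by a local-section argument. By condition (4) in the definition of Galois, all stabilizers are trivial outside a finite subset of $\Gamma$, so near a generic $y \in \Gamma^{\prime}$ one can choose a continuous local section $\sigma$ of $\varphi$ and write $\theta = \psi \circ \sigma$; at the finitely many exceptional points, $\varphi$ is open and finite-to-one in a neighborhood, hence locally a topological quotient, so $\theta$ still descends continuously.

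The main substance is showing that $\theta$ is a finite harmonic morphism. Fix $y \in \Gamma^{\prime}$ and $x \in \varphi^{-1}(y)$; the Galois property implies that $G_x$ acts transitively on the preimages in the star of $x$ of any edge $e^{\prime}$ at $y$, and since $G$ acts isometrically these preimages share a common $\varphi$-dilation factor $d_{\varphi}$ and a common $\psi$-dilation factor $d_{\psi}$. Comparing slopes on one such preimage edge $e$ via $\psi|_e = \theta|_{e^{\prime}} \circ \varphi|_e$ forces $d_{\varphi} \mid d_{\psi}$ and assigns to $e^{\prime}$ the integer dilation $d_{\psi}/d_{\varphi}$. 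Harmonicity of $\theta$ at $y$ should then follow from the harmonicities of $\varphi$ and $\psi$ at $x$ together with multiplicativity of local degrees, and finiteness of $\theta$ is inherited from that of $\psi = \theta \circ \varphi$. The main obstacle I anticipate is verifying globally that the dilation assigned to each edge of $\Gamma^{\prime}$ is coherent (independent of the chosen lift $x$ and preimage edge) and that the balancing condition holds at the finitely many non-generic vertices of $\Gamma^{\prime}$; both reduce to $G$-equivariant bookkeeping afforded by conditions (3) and (4).
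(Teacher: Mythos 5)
Your overall architecture matches the paper's: descend $\psi$ through the quotient (the paper does this in one line, since $\varphi$ is, up to a degree-one isomorphism, the topological quotient map $\pi_G$ and $\psi$ is $G$-invariant, so continuity and uniqueness come for free from the quotient topology rather than from local sections), then verify edge-by-edge dilation and vertex-by-vertex harmonicity. But there is one genuinely unjustified step at the heart of the argument: you claim that comparing slopes in $\psi|_e = \theta|_{e^{\prime}} \circ \varphi|_e$ ``forces $d_{\varphi} \mid d_{\psi}$.'' The composition identity alone forces nothing of the sort; it only says that $\theta$ dilates $e^{\prime}$ by the rational number $d_{\psi}/d_{\varphi}$, and whether that is a positive integer is exactly the issue. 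The paper's Examples \ref{ex:C}, \ref{ex:C2}, and \ref{ex:C1} are built precisely on this failure: there one has a $G$-invariant finite harmonic $\psi$ with, e.g., $d_{\varphi}=2$ and $d_{\psi}=1$ on some edge, and the induced $\theta$ is not a finite morphism. So ``forced'' is false for general invariant $\psi$ over a quotient, and your sketch as written does not distinguish the Galois case from the preGalois counterexamples.

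What actually closes this step is condition $(4)$: since all but finitely many points of $\Gamma$ have trivial stabilizer and an edge contains infinitely many points, every edge stabilizer $G_e$ is trivial, so $\operatorname{deg}_e(\varphi) = |G_e| = 1$ for \emph{every} edge, i.e., $\varphi$ is an isometry on each edge. Then $\operatorname{deg}_{e^{\prime}}(\theta) = \operatorname{deg}_e(\psi)$ is automatically a positive integer, which is how the paper argues. You invoke condition $(4)$ only for continuity at exceptional points and for unspecified ``bookkeeping,'' but never to pin down $d_{\varphi} = 1$, which is its essential use here. Relatedly, the harmonicity claim is left at the level of ``should then follow''; the paper's computation needs the further quantitative input $|\varphi^{-1}(e^{\prime})| = |G|$ for every edge $e^{\prime}$ of $\Gamma^{\prime}$ (again from trivial edge stabilizers) to convert $\operatorname{deg}_v(\psi) = \sum_{e^{\prime} \ni \varphi(v),\, e^{\prime} \mapsto e^{\prime\prime}} \operatorname{deg}_{e^{\prime}}(\theta) \cdot |G| / |Gv|$ into independence of $e^{\prime\prime}$. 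Both missing pieces trace back to the same omission.
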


Our definition of Galois coverings on tropical curves is an enhanced version of the definition in \cite{JuAe2} (the condition (4) is added).
By adding the condition (4), Theorems \ref{thm:Galois correspondence1} and \ref{thm:UMP} above and Theorem \ref{thm:Galoisextension} below, which will be shown in \cite{JuAe3}, hold; see Remark \ref{rem:UMP} and Examples \ref{ex:C}, \ref{ex:C2}, \ref{ex:C1}.

\begin{thm}[\cite{JuAe3}]
	\label{thm:Galoisextension}
Let $\varphi : \Gamma \to \Gamma^{\prime}$ be a finite harmonic morphism between tropical curves and $G$ a finite group isometrically acting on $\Gamma$.
Then, $\varphi$ is $G$-Galois if and only if the action of $G$ on $\operatorname{Rat}(\Gamma)$ naturally induced by the action of $G$ on $\Gamma$ is Galois for the semifield extension $\operatorname{Rat}(\Gamma) / \varphi^{\ast}(\operatorname{Rat}(\Gamma^{\prime}))$.
\end{thm}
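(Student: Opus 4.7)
The plan is to prove both implications, treating $\operatorname{Rat}(\Gamma)$ as the tropical semifield of piecewise-linear functions with integer slopes, equipped with the pullback action of $G$ by precomposition with inverses. For the forward direction ($\varphi$ being $G$-Galois implies the semifield extension is Galois), the heart of the argument is identifying the subsemifield of invariants $\operatorname{Rat}(\Gamma)^G$ with $\varphi^{\ast}(\operatorname{Rat}(\Gamma^{\prime}))$. The inclusion $\varphi^{\ast}(\operatorname{Rat}(\Gamma^{\prime})) \subset \operatorname{Rat}(\Gamma)^G$ is immediate, since pullbacks are constant on fibers and $G$ preserves fibers by condition (3). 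For the reverse inclusion, any $G$-invariant $f$ is constant on $G$-orbits, which by (3) coincide with fibers of $\varphi$, so $f$ set-theoretically descends to a continuous function $\bar f$ on $\Gamma^{\prime}$; the delicate point is showing that $\bar f$ is actually a tropical rational function. At generic points, where condition (4) forces trivial stabilizers and the local degree of $\varphi$ is well-defined, this follows by transferring slopes of $f$ to slopes of $\bar f$ via the harmonicity of $\varphi$, and then extending across the finite exceptional locus by continuity. The matching of $|G|$ with the degree of the extension comes from condition (2).

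For the reverse implication, the task is to recover conditions (2), (3), and (4) from the Galois hypothesis (condition (1) being assumed). Condition (2) should follow from pairing $|G|$ with the degree of the extension in the semifield-theoretic definition of Galois. For condition (3), if some fiber $\varphi^{-1}(p^{\prime})$ decomposed into two distinct $G$-orbits $O_1$ and $O_2$, one could construct, using tropical bump functions peaked at $O_1$ and then symmetrizing via tropical $\min$ over the $G$-orbit, a $G$-invariant function in $\operatorname{Rat}(\Gamma)$ that is not constant on $\varphi^{-1}(p^{\prime})$ and hence cannot be a pullback from $\Gamma^{\prime}$, contradicting $\operatorname{Rat}(\Gamma)^G = \varphi^{\ast}(\operatorname{Rat}(\Gamma^{\prime}))$. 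A similar separation argument applied at a putative set of points with non-trivial stabilizer of positive measure yields condition (4).

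The main obstacle is the descent statement used in the forward direction: that a $G$-invariant tropical rational function on $\Gamma$ is literally the pullback of a tropical rational function on $\Gamma^{\prime}$. In classical Galois theory this descent is automatic from linear algebra over a field, but over the idempotent semifield of tropical rational functions such tools are unavailable. The proof must exploit the specific piecewise-linear, integer-slope structure, together with the harmonicity identity for $\varphi$ and condition (4), to verify that the quotient function on $\Gamma^{\prime}$ is itself piecewise linear with integer slopes, in particular across the branch locus where fiber sizes collapse. Once this descent and the corresponding separation arguments are in place, both directions of the equivalence reduce to matching the four defining conditions of $G$-Galois coverings against the semifield Galois axioms.
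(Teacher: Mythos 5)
This theorem is not proved in the present paper at all: it is quoted from the forthcoming reference \cite{JuAe3} (``in preparation''), so there is no in-paper proof to compare your attempt against. Judged on its own terms, your proposal has a genuine gap beyond being only a sketch.

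The most serious problem is that you have misread what ``the action of $G$ on $\operatorname{Rat}(\Gamma)$ is Galois'' means here. The paper defines it as the conjunction of two conditions: (a) the $G$-invariant subsemifield equals $\varphi^{\ast}(\operatorname{Rat}(\Gamma^{\prime}))$, and (b) distinct subgroups of $G$ have distinct invariant subsemifields. There is no notion of ``degree of the extension'' in this definition, yet both of your directions lean on ``matching $|G|$ with the degree of the extension'' to handle condition (2) of the Galois covering definition; that step has nothing to stand on as stated. Worse, condition (b) is never addressed in your forward direction: you would need, for every proper inclusion of subgroups $H_1 \subsetneq H_2 \subset G$, to exhibit an $H_1$-invariant tropical rational function that is not $H_2$-invariant (a separation argument in the spirit of your bump-function construction, but carried out for arbitrary pairs of subgroups, using condition (4) to find a point whose $H_2$-orbit strictly contains its $H_1$-orbit). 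Without this, the forward implication is simply not established even modulo the descent lemma.

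On the descent lemma itself (that a $G$-invariant $f \in \operatorname{Rat}(\Gamma)$ is a pullback), you correctly identify it as the crux but leave it unproved. Note that the relevant structure is already in the paper's Remark \ref{rem:pi1}: edge lengths in $\Gamma/G$ are rescaled by $|G_e|$, so the integer-slope property of the descended function can fail precisely on edges with nontrivial stabilizer; condition (4), via a model in which edge stabilizers act trivially on the edges they preserve, forces $|G_e|=1$ and makes $\pi_G$ an isometry on each edge, which is what rescues integrality. Spelling this out (together with the subgroup-separation argument above and a correct derivation of $\operatorname{deg}(\varphi)=|G|$ from transitivity on fibers plus generically trivial stabilizers, rather than from a degree axiom) is what a complete proof would require; your proposal names the right ingredients for part of the problem but does not assemble them, and omits half of the definition it is supposed to verify.
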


Here, $\operatorname{Rat}(\Gamma)$ denotes the semifield of all rational functions on $\Gamma$ and $\varphi^{\ast}(\operatorname{Rat}(\Gamma^{\prime}))$ the pull-back.
In the setting of Theorem \ref{thm:Galoisextension}, ``the action of $G$ on $\operatorname{Rat}(\Gamma)$ is Galois for $\operatorname{Rat}(\Gamma) / \varphi^{\ast}(\operatorname{Rat}(\Gamma^{\prime}))$" means that the $G$-invariant subsemifield of $\operatorname{Rat}(\Gamma)$ is $\varphi^{\ast}(\operatorname{Rat}(\Gamma^{\prime}))$ and that subgroups of $G$ whose invariant semifields coincide are equal.

This paper is organized as follows.
In Section 2, we give the definitions of tropical curves and finite harmonic morphisms between tropical curves.
Section 3 gives proofs of Theorems \ref{thm:Galois correspondence1}, \ref{thm:intermediate1}, and \ref{thm:UMP}.

\section*{Acknowledgements}
The author thanks my supervisor Masanori Kobayashi, Yuki Kageyama, and Yasuhito Nakajima for helpful comments.
This work was supported by JSPS KAKENHI Grant Number 20J11910.

\section{Preliminaries}
	\label{section2}

In this section, we prepare basic definitions related to tropical curves which we need later.
See, for example, \cite{JuAe1} for details.

In this paper, a \textit{graph} is an unweighted, undirected, finite, connected nonempty multigraph that may have loops.
For a graph $G$, the set of vertices is denoted by $V(G)$ and the set of edges by $E(G)$.
The \textit{degree} of a vertex is the number of edges incident to it.
Here, a loop is counted twice.
A \textit{leaf end} is a vertex of degree one.
A \textit{leaf edge} is an edge incident to a leaf end.
A \textit{tropical curve} is the underlying topological space of the pair $(G, l)$ of a graph $G$ and a function $l: E(G) \to {\boldsymbol{R}}_{>0} \cup \{\infty\}$ called a \textit{length function}, where $l$ can take the value $\infty$ on only leaf edges, together with an identification of each edge $e$ of $G$ with the closed interval $[0, l(e)]$.
When $l(e)=\infty$, the interval $[0, \infty]$ is the one point compactification of the interval $[0, \infty)$ and the leaf end of $e$ must be identified with $\infty$.
We regard $[0, \infty]$ not just as a topological space but as almost a metric space.
The distance between $\infty$ and any other point is infinite.
If $E(G) = \{ e \}$ and $l(e)=\infty$, then we can identify either leaf ends of $e$ with $\infty$.
When a tropical curve $\Gamma$ is obtained from $(G, l)$, the pair $(G, l)$ is called a \textit{model} for $\Gamma$.
There are many possible models for $\Gamma$.
A model $(G, l)$ is \textit{loopless} if $G$ is loopless.
Let $\Gamma_{\infty}$ denote the set of all points of $\Gamma$ identified with $\infty$.
An element of $\Gamma_{\infty}$ is called a \textit{point at infinity}.
We frequently identify a vertex (resp. an edge) of $G$ with the corresponding point (resp. the corresponding closed subset) of $\Gamma$.
The \textit{relative interior} $e^{\circ}$ of an edge $e$ is $e \setminus \{v, w\}$ with the endpoint(s) $v, w$ of $e$.

Let $\varphi : \Gamma \to \Gamma^{\prime}$ be a continuous map between tropical curves.
$\varphi$ is a {\it finite morphism} if there exist loopless models $(G, l)$ and $(G^{\prime}, l^{\prime})$ for $\Gamma$ and $\Gamma^{\prime}$, respectively, such that $(1)$ $\varphi(V(G)) \subset V(G^{\prime})$ holds, $(2)$ $\varphi(E(G)) \subset E(G^{\prime})$ holds, and $(3)$ for any edge $e$ of $G$, there exists a positive integer $\operatorname{deg}_e(\varphi)$ such that for any points $x, y$ of $e$, $\operatorname{dist}(\varphi (x), \varphi (y)) = \operatorname{deg}_e(\varphi) \cdot \operatorname{dist}(x, y)$ holds.
Here, $\operatorname{dist}(x, y)$ denotes the distance between $x$ and $y$.
Assume that $\varphi$ is a finite morphism.
$\varphi$ is \textit{harmonic} if for every vertex $v$ of $G$, the sum $\sum_{e \in E(G):\, e \mapsto e^{\prime},\, v \in e} \operatorname{deg}_e(\varphi)$ is independent of the choice of $e^{\prime} \in E(G^{\prime})$ incident to $\varphi(v)$.
This sum is denoted by $\operatorname{deg}_v(\varphi)$.
Then, the sum $\Sigma_{v \in V(G):\, v \mapsto v^{\prime}} \operatorname{deg}_v(\varphi)$ is independent of the choice of vertex $v^{\prime}$ of $G^{\prime}$.
It is said the {\it degree} of $\varphi$ and written by $\operatorname{deg}(\varphi)$.
If both $\Gamma$ and $\Gamma^{\prime}$ are singletons, we regard $\varphi$ as a finite harmonic morphism that can have any number as its degree.
Note that if $\varphi \circ \psi$ is a composition of finite harmonic morphisms, then it is also a finite harmonic morphism of degree $\operatorname{deg}(\varphi) \cdot \operatorname{deg}(\psi)$, and thus tropical curves and finite harmonic morphisms between them make a category.

\begin{rem}
	\label{rem:isom}
\upshape{
Let $\varphi : \Gamma \to \Gamma^{\prime}$ be a map between tropical curves.
Then $\varphi$ is a continuous map whose restriction on $\Gamma \setminus \Gamma_{\infty}$ is an isometry if and only if it is a finite harmonic morphism of degree one.
In this paper, we will use the word ``a finite group $G$ isometrically acts on a tropical curve $\Gamma$" as the meaning that $G$ continuously acts on $\Gamma$ and it is isometric on $\Gamma \setminus \Gamma_{\infty}$.
}
\end{rem}

\begin{rem}
	\label{rem:pi1}
\upshape{
Let $\Gamma$ be a tropical curve and $G$ a finite group isometrically acting on $\Gamma$.
Let $\Gamma / G$ be the quotient space (as a topological space) and $\pi_G : \Gamma \to \Gamma / G$ be the natural surjection.
Fix a loopless model $(V, E, l)$ ($V$ is a set of vertices and $E$ is a set of edges) for $\Gamma$ such that for any $g \in G$ and $e \in E$, $g(V) = V$ holds and if $g(e) = e$, then the restriction of $g$ on $e$ is the identity map of $e$.
Note that by the proof of \cite[Lemma 3.4]{JuAe1}, we can choose such a loopless model.
Let $V^{\prime} := \pi_G(V)$, $E^{\prime} := \pi_G(E)$ and for any $e \in E$, $l^{\prime}(\pi_G(e)) := |G_e| \cdot l(e)$, where $G_e$ denotes the stabilizer subgroup of $G$ with respect to $e$ and $|G_e|$ the order of $G_e$.
Then, $(V^{\prime}, E^{\prime}, l^{\prime})$ gives $\Gamma / G$ a tropical curve structure and is a loopless model for the quotient tropical curve $\Gamma / G$.
By loopless models $(V, E, l)$ and $(V^{\prime}, E^{\prime}, l^{\prime})$ for $\Gamma$ and $\Gamma / G$, respectively, $\pi_G$ is a finite harmonic morphism of degree $|G|$.
}
\end{rem}

\section{Main results}

In this section, we give our definition of Galois coverings on tropical curves and proofs of Theorems \ref{thm:Galois correspondence1}, \ref{thm:intermediate1}, and \ref{thm:UMP}.

\begin{dfn}
	\label{dfn:Galois1}
\upshape{
Let $\Gamma$ be a tropical curve and $G$ a finite group.
An isometric action of $G$ on $\Gamma$ is \textit{Galois} if there exists a finite subset $U^{\prime}$ of $\Gamma / G$ such that for any $x^{\prime} \in (\Gamma / G) \setminus U^{\prime}$, $|\pi^{-1}_G(x^{\prime})| = |G|$ holds.
}
\end{dfn}

\begin{dfn}
	\label{dfn:Galois2}
\upshape{
Let $\varphi : \Gamma \to \Gamma^{\prime}$ be a map between tropical curves.
$\varphi$ is \textit{Galois} if there exists a Galois action of a finite group $G$ on $\Gamma$ such that there exists a finite haramonic morphism of degree one $\theta : \Gamma / G \to \Gamma^{\prime}$ satisfying $\varphi \circ g = \theta \circ \pi_G$ for any $g \in G$.
Then, we say that $\varphi$ is a \textit{$G$-Galois covering} on $\Gamma^{\prime}$ or just \textit{$G$-Galois}.
}
\end{dfn}

\begin{proof}[Proof of Theorem \ref{thm:Galois correspondence1}]

Let 
\begin{align*}
\Phi : A \to B; \qquad [\psi] \mapsto G(\psi)
\end{align*}
and
\begin{align*}
\Psi : B \to A; \qquad G^{\prime} \mapsto [\pi_{G^{\prime}}].
\end{align*}

We shall show that if $\psi_1 \sim \psi_2$ holds, then $G(\psi_1) = G(\psi_2)$ holds.
For any $f \in G(\psi_2)$, we have
\begin{align*}
\psi_1 \circ f = (\psi_{12} \circ \psi_2) \circ f = \psi_{12} \circ (\psi_2 \circ f) = \psi_{12} \circ \psi_2 = \psi_1.
\end{align*}
Thus $f \in G(\psi_1)$ holds.
The inverse inclusion can be shown similarly.
Therefore, $\Phi$ is well-defined.

We shall show that for any $G^{\prime} \in B$, the action of $G^{\prime}$ on $\Gamma$ induced by that of $G$ on $\Gamma$ is Galois.
Let $U^{\prime}$ be the finite subset of $\Gamma / G$ in Definition \ref{dfn:Galois1}.
For any $x \in \Gamma \setminus \pi^{-1}_G(U^{\prime})$, we have $|G_x| = \frac{|G|}{|Gx|} = 1$, where we use the orbit-stabilizer theorem (cf. \cite[Chapter 6]{Clive=Stuart}) and $Gx$ stands for the orbit of $x$ by $G$, and thus $|G^{\prime}_x| = 1$.
Therefore, $\pi_{G^{\prime}}(\pi^{-1}_G(U^{\prime}))$ is finite and for any $x^{\prime} \in (\Gamma / G^{\prime}) \setminus \pi_{G^{\prime}}(\pi^{-1}_G(U^{\prime}))$, we have $|\pi^{-1}_{G^{\prime}}(x^{\prime})| = |G^{\prime}|$.

We shall show that for any $G^{\prime} \in B$, $G^{\prime} = G(\pi_{G^{\prime}})$ holds.
By the definition of $G(\pi_{G^{\prime}})$, $G^{\prime} \subset G(\pi_{G^{\prime}})$ holds.
Let $g \in G(\pi_{G^{\prime}})$.
For any $x \in \Gamma$, since $(\pi_{G^{\prime}} \circ g)(x) = \pi_{G^{\prime}}(x)$, there exists $g^{\prime} \in G^{\prime}$ such that $g(x) = g^{\prime}(x)$.
Let us assume that $x$ is in $\Gamma \setminus \pi^{-1}_G(U^{\prime})$.
Then $g = g^{\prime}$ must hold.
In fact, if $g \not= g^{\prime}$, then $g^{-1} g \not= 1$ and $g^{-1} g^{\prime}(x) = x$.
Thus, $g^{-1} g^{\prime} \in G_x \setminus \{ 1 \}$ and it contradicts that $|\pi^{-1}_G(x)| = |G|$.
By these arguments, we have $[\pi_{G^{\prime}}] \in A$ and $\Phi \circ \Psi = \operatorname{id}_B$, where $\operatorname{id}_B$ denotes the identity map of $B$.

We shall show that $\Psi \circ \Phi = \operatorname{id}_A$ holds.
By the definitions of $A$ and Galois coverings, for any $\psi \in A^{\prime}$, $\psi \sim \pi_{G(\psi)}$ holds, and hence we have 
\[
(\Psi \circ \Phi)([\psi]) = [\pi_{G(\psi)}] = [\psi].
\]

We shall show that if $[\psi_1] \le_A [\psi_2]$ holds, then $G(\psi_1) \ge_B G(\psi_2)$ holds.
For $g \in G(\psi_2)$, we have $\psi_2 \circ g = \psi_2$.
Since $[\psi_1] \le_A [\psi_2]$, there exists a finite harmonic morphism $\theta$ satisfying $\psi_1 = \theta \circ \psi_2$.
Therefore 
\begin{align*}
\psi_1 \circ g = (\theta \circ \psi_2) \circ g = \theta \circ (\psi_2 \circ g) = \theta \circ \psi_2 = \psi_1
\end{align*}
hold.
Hence $g \in G(\psi_1)$.

Finally, we shall show that if $G_1 \le_B G_2$ holds, then $[\pi_{G_1}] \ge_A [\pi_{G_2}]$ holds.
Since $G_1 \subset G_2$, there exists a unique continuous map $\theta$ satisfying $\theta \circ \pi_{G_1} = \pi_{G_2}$.
We shall show $\theta$ is a finite harmonic morphism.
Let $(V, E, l), (V_1, E_1, l_1)$ and $(V_2, E_2, l_2)$ be loopless models for $\Gamma$, $\Gamma / G_1$ and $\Gamma / G_2$, respectively, compatible with $\pi_{G_i}$, i.e., $\pi_{G_i}(V) = V_i$ for $i = 1, 2$.
Let $e \in E$.
Since each $\pi_{G_i}$ is Galois, for any points $x, y$ of $e$, we have
\begin{align*}
\operatorname{dist}(\pi_{G_2}(x), \pi_{G_2}(y)) = \operatorname{dist}(x, y) = \operatorname{dist}(\pi_{G_1}(x), \pi_{G_1}(y)).
\end{align*}
Hence $\theta$ is a finite morphism and $\operatorname{deg}_{\pi_1(e)}(\theta) = 1$.
Next, we shall show that $\theta$ is harmonic.
Fix a vertex $v^{\prime} \in V_1$ and an edge $e^{\prime \prime} \in E_2$ incident to $\theta(v^{\prime})$.
Since the action of $G$ on $\Gamma$ is isometric and $G_1 \subset G_2$, with any $v \in \pi^{-1}_{G_2}(\theta(v^{\prime}))$, we have
\begin{align*}
\sum_{e^{\prime} \in E_1 :\, e^{\prime} \mapsto e^{\prime \prime},\, v^{\prime} \in e^{\prime}} \operatorname{deg}_{e^{\prime}}(\theta) &= \frac{1}{|\theta^{-1}(\theta(v^{\prime}))|} \cdot \sum_{e^{\prime} \in E_1 : e^{\prime} \mapsto e^{\prime \prime}} \operatorname{deg}_{e^{\prime}}(\theta)\\
&= \frac{1}{|\theta^{-1}(\theta(v^{\prime}))|} \cdot |\{ e^{\prime} \in E_1 \,|\, \theta(e^{\prime}) = e^{\prime \prime} \}|
\end{align*}
and
\begin{align*}
|\theta^{-1}(\theta(v^{\prime}))| = \frac{|\pi_{G_2}^{-1}(\theta(v^{\prime}))|}{|G_1 v|} = \frac{|G_2 v|}{|G_1 v|}.
\end{align*}
With any edge $e \in E$ such that $\pi_{G_2}(e) = e^{\prime \prime}$, the ratio $\frac{|G_2 e|}{|G_1 e|} = \frac{|G_2|}{|G_1|}$ coincides with the number of elements of the inverse image of $e^{\prime \prime}$ by $\theta$.
Therefore we have
\begin{align*}
\sum_{e^{\prime} \in E_1 :\, e^{\prime} \mapsto e^{\prime \prime},\, v^{\prime} \in e^{\prime}} \operatorname{deg}_{e^{\prime}}(\theta) = \frac{|G_1 v|}{|G_2 v|} \cdot \frac{|G_2|}{|G_1|} = \frac{|{G_2}_v|}{|{G_1}_v|} \in \boldsymbol{Z}_{>0},
\end{align*}
where we use the orbit-stabilizer theorem at the second equality.
Hence the sum is independent of the choice of $e^{\prime \prime}$ and thus $\theta$ is harmonic.
\end{proof}

Next, we will prove Theorem \ref{thm:intermediate1}.
For this purpose, we define normal coverings on tropical curves:

\begin{dfn}
	\label{dfn:normal}
\upshape{
Let $\varphi : \Gamma \to \Gamma^{\prime}$ be a map between tropical curves.
$\varphi$ is \textit{normal} if there exists a Galois action of a finite group $H$ on $\Gamma$ and a finite harmonic morphism $\theta : \Gamma / H \to \Gamma^{\prime}$ such that for any $h \in H$, $\varphi \circ h = \theta \circ \pi_H$ holds.
Then, we say that $\varphi$ is a \textit{$H$-normal covering} on $\Gamma^{\prime}$ or just \textit{$H$-normal}.
}
\end{dfn}

By definition, if $\varphi$ is $G$-Galois, then it is $G$-normal.
Normal coverings have the following property:

\begin{prop}
	\label{prop:prenormal}
Let $\varphi : \Gamma \to \Gamma^{\prime}$ be a $H$-normal covering on $\Gamma^{\prime}$.
Then, for any finite harmonic morphism $\psi : \Gamma^{\prime \prime} \rightarrow \Gamma$, there exists a loopless model $(G^{\prime \prime}, l^{\prime \prime})$ for $\Gamma^{\prime \prime}$ such that for any $e^{\prime \prime} \in E(G^{\prime \prime})$ and any automorphim $f$ of $\Gamma^{\prime \prime}$, i.e., a finite harmonic morphism of degree one $\Gamma^{\prime \prime} \to \Gamma^{\prime \prime}$, satisfying $\varphi \circ \psi \circ f = \varphi \circ \psi$, there exists $h \in H$ such that $(\psi \circ f)|_{e^{\prime \prime}} = (h \circ \psi)|_{e^{\prime \prime}}$, where $|_{e^{\prime \prime}}$ denotes the restriction of the map on $e^{\prime \prime}$.
\end{prop}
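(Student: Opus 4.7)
The plan is to reduce the edgewise assertion to a pointwise one using transitivity of the $H$-action on $\varphi$-fibers, and then to upgrade it via a local-constancy argument that is clean outside a finite ``ramification'' set.

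First, I would isolate a finite bad locus. Since the $H$-action on $\Gamma$ is Galois in the sense of Definition~\ref{dfn:Galois1}, the orbit-stabilizer theorem shows that the set $F := \{y \in \Gamma : H_y \ne \{1\}\}$ is contained in $\pi_H^{-1}(U')$ for the finite subset $U' \subset \Gamma/H$ appearing in that definition, and is therefore itself finite. Because $\psi$ is a finite morphism, $F'' := \psi^{-1}(F)$ is a finite subset of $\Gamma''$. I then choose any loopless model $(G'', l'')$ for $\Gamma''$ whose vertex set contains $F''$; this only requires refining an arbitrary loopless model at finitely many points.

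Next, I would establish the pointwise version. Fix an automorphism $f$ of $\Gamma''$ satisfying $\varphi \circ \psi \circ f = \varphi \circ \psi$. For each $x \in \Gamma''$ the points $\psi(x)$ and $\psi(f(x))$ lie in a common fiber of $\varphi$, and the $H$-action is transitive on each such fiber (this is part of the content of $H$-normality: the factorization $\varphi = \theta \circ \pi_H$ identifies $\varphi$-fibers with $H$-orbits). Hence some $h \in H$ carries $\psi(x)$ to $\psi(f(x))$. When $x$ lies in the interior $e''^\circ$ of some $e'' \in E(G'')$, we have $x \notin F''$, so $\psi(x) \notin F$, so the orbit $H \cdot \psi(x)$ has exactly $|H|$ distinct elements and $h$ is uniquely determined; I write it as $h_x$.

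The crux is showing that $x \mapsto h_x$ is constant on the connected set $e''^\circ$, after which continuity propagates the identity $\psi \circ f = h \circ \psi$ to the closure $e''$. I would prove local constancy at a fixed $x_0 \in e''^\circ$: the $|H|$ points of $H \cdot \psi(x_0)$ are pairwise separated by some $\varepsilon > 0$, and by continuity of $\psi$, of $f$, and of the isometric $H$-action (cf.\ Remark~\ref{rem:isom}) a small enough neighborhood of $x_0$ in $e''^\circ$ has $\psi(f(x))$ within $\varepsilon/3$ of $h_{x_0}\cdot\psi(x_0)$, while every $h' \cdot \psi(x)$ with $h' \ne h_{x_0}$ stays at distance more than $\varepsilon/3$ from $h_{x_0}\cdot\psi(x_0)$. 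The uniqueness of $h_x$ then forces $h_x = h_{x_0}$ throughout the neighborhood.

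The main obstacle is this local-constancy argument, as it is the only place where the tropical metric and the continuity of the $H$-action interact nontrivially; everything else is formal once the model has been refined so that $F'' \subset V(G'')$.
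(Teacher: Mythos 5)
Your argument is correct, and its skeleton matches the paper's: reduce to interior points of edges, use transitivity of the $H$-action on $\varphi$-fibers to produce an $h$ pointwise, propagate along the edge, and finish by continuity on the closure. (Both you and the paper invoke fiber-transitivity as part of $H$-normality; this matches the Introduction's definition via conditions (1), (3), (4), though not literally Definition~\ref{dfn:normal}, where $\theta$ is not required to have degree one --- a gloss you share with the paper rather than a defect.) Where you genuinely diverge is in the choice of model and the propagation mechanism. The paper invokes the proof of \cite[Lemma 3.4]{JuAe1} to pick $(G'',l'')$ compatible with $\psi$ and $\varphi\circ\psi$ and adapted to the $H$-action (so that any $h$ fixing $\psi(e'')$ setwise fixes it pointwise), fixes a single interior point and a single $h$ there, and then extends over the edge by a rigidity argument for isometric dilations (``$H$ acts isometrically and $h$ does not invert any edge''). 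You instead refine the model only at the finite set $\psi^{-1}(F)$, where $F$ is the locus of nontrivial stabilizers, which makes the pointwise element $h_x$ \emph{unique} at every interior point, and then prove local constancy of $x\mapsto h_x$ by an $\varepsilon/3$-separation of the $|H|$ distinct orbit points. Your route makes explicit use of the generic freeness of the action (condition (4)/Definition~\ref{dfn:Galois1}) and is more self-contained metrically, at the cost of a slightly longer continuity argument; the paper's route leans on the cited model-construction lemma and is shorter but terser at the propagation step. Both are valid proofs of the stated proposition.
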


\begin{proof}
By the proof of \cite[Lemma 3.4]{JuAe1}, we can choose $(G^{\prime \prime}, l^{\prime \prime})$ such that
$\psi(V(G^{\prime \prime}))$ (resp. $(\varphi \circ \psi)(V(G^{\prime \prime}))$) induces a loopless model for $\Gamma$ (resp. $\Gamma^{\prime}$) compatible with $\psi$ (resp. $\varphi \circ \psi$) and for any $e^{\prime \prime} \in E(G^{\prime \prime})$ and $h \in H$, if $h(\psi(e^{\prime \prime})) = \psi(e^{\prime \prime})$, then the restriction $h|_{\psi(e^{\prime \prime})}$ is the identity map on $\psi(e^{\prime \prime})$.
For any edge $e^{\prime \prime} \in E(G^{\prime \prime})$ and any point $x^{\prime \prime}$ of $(e^{\prime \prime})^{\circ}$, since $(\varphi \circ \psi \circ f) (x^{\prime \prime}) = (\varphi \circ \psi) (x^{\prime \prime})$, there exists $h \in H$ satisfying $(\psi \circ f) (x^{\prime \prime}) = (h \circ \psi) (x^{\prime \prime})$.
Since the action of $H$ on $\Gamma$ is isometric and $h$ does not invert any edge, $(\psi \circ f)|_{(e^{\prime \prime})^{\circ}} = (h \circ \psi)|_{(e^{\prime \prime})^{\circ}}$ holds.
Since both $\psi \circ f$ and $h \circ \psi$ are continuous, $(\psi \circ f)|_{e^{\prime \prime}} = (h \circ \psi)|_{e^{\prime \prime}}$ holds.
\end{proof}

This proposition is an analogue of the fact in the field theory that for an algebraic extension $L / K$ of fields, it is normal if and only if all embeddings $\sigma$ of $L$ to the algebraic closure of $K$ containing $L$ which is the identity on $K$ satisfy $\sigma(L) = L$.
In the tropical curve case, however, we must consider only an edge instead of the whole tropical curve.

Let $\varphi : \Gamma \rightarrow \Gamma^{\prime}$ be a $G$-Galois covering on $\Gamma^{\prime}$.
For any $\psi : \Gamma \rightarrow \Gamma^{\prime \prime}$ in $A^{\prime}$, since $G(\psi)$ is a subgroup of $G$ and $\Gamma^{\prime}$ and $\Gamma^{\prime \prime}$ have quotient topologies, there exists a unique continuous map $\theta : \Gamma^{\prime \prime} \to \Gamma^{\prime}$ such that $\varphi = \theta \circ \psi$.

\begin{prop}
	\label{prop:Galois correspondence1}
In the above setting, the following are equivalent:

$(1)$ $G(\psi)$ is a normal subgroup of $G$,

$(2)$ there exists a finite group $H$ isometrically acting on $\Gamma^{\prime \prime}$ such that $\theta$ is $H$-Galois and for any $g \in G$, there exists $h \in H$ satisfying $\psi \circ g = h \circ \psi$, and

$(3)$ there exists a finite group $H^{\prime}$ isometrically acting on $\Gamma^{\prime \prime}$ such that $\theta$ is $H^{\prime}$-normal and for any $g \in G$, there exists $h^{\prime} \in H^{\prime}$ satisfying $\psi \circ g = h^{\prime} \circ \psi$.

Moreover, if the above conditions hold, then $H$ in $(2)$ is isomorphic to the quotient group $G / G(\psi)$.
\end{prop}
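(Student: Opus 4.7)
My plan is to prove the three equivalences via the cycle $(1) \Rightarrow (2) \Rightarrow (3) \Rightarrow (1)$ and then extract the ``moreover'' statement as a refinement of the $(1) \Rightarrow (2)$ construction. The implication $(2) \Rightarrow (3)$ is immediate since every $H$-Galois covering is $H$-normal by Definition \ref{dfn:normal}. For $(3) \Rightarrow (1)$, I would fix $g \in G$ and $g_0 \in G(\psi)$, pick $h' \in H'$ with $\psi \circ g = h' \circ \psi$, and compute
\begin{align*}
\psi \circ (g g_0 g^{-1}) \circ g = \psi \circ g \circ g_0 = h' \circ \psi \circ g_0 = h' \circ \psi = \psi \circ g;
\end{align*}
right-composing with $g^{-1}$ yields $\psi \circ (g g_0 g^{-1}) = \psi$, so $g g_0 g^{-1} \in G(\psi)$, proving $G(\psi)$ is normal.

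The substance of the proposition lies in $(1) \Rightarrow (2)$. Since $\psi$ is $G(\psi)$-Galois, every fibre of $\psi$ is exactly one $G(\psi)$-orbit, so I can define, for each $g \in G$, a self-map $h_g : \Gamma'' \to \Gamma''$ by $h_g(\psi(x)) := \psi(g(x))$. Well-definedness is exactly where normality enters: if $\psi(x_1) = \psi(x_2)$, then $x_2 = g_0(x_1)$ for some $g_0 \in G(\psi)$, and $\psi(g(x_2)) = \psi((g g_0 g^{-1}) g(x_1)) = \psi(g(x_1))$ since $g g_0 g^{-1} \in G(\psi)$. Continuity of $h_g$ I would obtain from $\psi$ being a topological quotient map (it factors as $\pi_{G(\psi)}$ followed by the continuous bijection $\Gamma / G(\psi) \to \Gamma''$), and isometry of $h_g$ off $\Gamma''_{\infty}$ follows from the quotient-metric formula on $\Gamma / G(\psi)$, again using normality to rearrange $\min_{g_0} d(x, g^{-1} g_0 g(y))$ into $\min_{g_0'} d(x, g_0'(y))$. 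Letting $H$ be the image of the homomorphism $g \mapsto h_g$, the relation $\psi \circ g = h_g \circ \psi$ holds by construction and its kernel is $\{ g : \psi \circ g = \psi\} = G(\psi)$, so $H \cong G / G(\psi)$, which gives the ``moreover'' conclusion.

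It then remains to see that $\theta$ is $H$-Galois. Transitivity of $H$ on fibres of $\theta$ follows by lifting: if $\theta(x'') = \theta(y'')$, choose pre-images under $\psi$ and apply $G$-transitivity on the corresponding $\varphi$-fibre. For generic freeness, the $H$-stabiliser of $\psi(x)$ equals the image of $G_x \cdot G(\psi)$ in $G / G(\psi)$, which is trivial whenever $G_x$ is, and this holds on a cofinite subset of $\Gamma$ and hence of $\Gamma''$. The identity $\theta \circ h_g \circ \psi = \varphi \circ g = \varphi = \theta \circ \psi$ together with surjectivity of $\psi$ yields $\theta \circ h_g = \theta$, so $\theta$ factors through $\pi_H$ as some $\bar\theta : \Gamma'' / H \to \Gamma'$; multiplicativity of degrees gives $\deg \theta = |G| / |G(\psi)| = |H|$, hence $\deg \bar\theta = 1$, and Remark \ref{rem:isom} lets me conclude $\bar\theta$ is a finite harmonic morphism of degree one once continuity and isometry off points at infinity are checked. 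For the uniqueness asserted in ``moreover'', the element $h$ with $\psi \circ g = h \circ \psi$ in any $H$ satisfying (2) is determined by surjectivity of $\psi$, so $g \mapsto h$ is the same homomorphism with kernel $G(\psi)$, and $|H| = \deg \theta = [G : G(\psi)]$ forces it to be an isomorphism onto $H$.

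The main obstacle I anticipate is the rigorous verification that the factor $\bar\theta : \Gamma'' / H \to \Gamma'$ is a finite harmonic morphism in the sense of the paper's category rather than merely a continuous degree-compatible map; more broadly, the $(1) \Rightarrow (2)$ direction requires simultaneously tracking continuity, isometry, and harmonicity through descended and quotient maps, and the bookkeeping with compatible loopless models and edge-degrees (mirroring the computation at the end of the proof of Theorem \ref{thm:Galois correspondence1}) is where the most delicate work will sit.
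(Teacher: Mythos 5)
Your plan follows essentially the same route as the paper: $(2) \Rightarrow (3)$ is immediate, $(3) \Rightarrow (1)$ is the same conjugation computation, and $(1) \Rightarrow (2)$ is the same construction of the induced $G/G(\psi)$-action on $\Gamma^{\prime\prime}$ via $\psi(x) \mapsto \psi(g(x))$, with the same well-definedness, generic-freeness, and fibre-transitivity checks and the same order-counting argument for the ``moreover'' part. The one step to make explicit is that $\theta$ is already a finite harmonic morphism of degree $|G|/|G(\psi)|$ \emph{before} you invoke multiplicativity of degrees --- the paper imports this from the harmonicity computation at the end of the proof of Theorem \ref{thm:Galois correspondence1} --- after which your descent of $\theta$ through $\pi_H$ is equivalent to the paper's direct verification that $\phi : \Gamma^{\prime\prime}/(G/G(\psi)) \to \Gamma^{\prime}$ is an injective local isometry, hence a finite harmonic morphism of degree one.
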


For a map between tropical curves, we will use the word ``it is a local isometry" as the meaning that it is continuous on the whole domain tropical curve and that it is a local isometry on the domain tropical curve except all points at infinity.

\begin{proof}[Proof of Proposition \ref{prop:Galois correspondence1}]
Clearly (2) implies (3).

Assume that (3) holds.
For $g \in G$ and $h^{\prime} \in H^{\prime}$, if $\psi \circ g = h^{\prime} \circ \psi$, then $\psi \circ g^{-1} = {h^{\prime}}^{-1} \circ \psi$, and thus for any $g^{\prime} \in G(\psi)$, we have
\begin{align*}
\psi \circ(g^{-1} g^{\prime} g) &= (\psi \circ g^{-1}) \circ g^{\prime}g = ({h^{\prime}}^{-1} \circ \psi) \circ g^{\prime}g\\
&= {h^{\prime}}^{-1} \circ (\psi \circ g^{\prime}) \circ g 
= {h^{\prime}}^{-1} \circ \psi \circ g\\
&= {h^{\prime}}^{-1} \circ (\psi \circ g)
= {h^{\prime}}^{-1} h^{\prime} \circ \psi
= \psi.
\end{align*}
This means that $g^{-1} g^{\prime} g \in G(\psi)$.
Thus (1) holds.

Assume that (1) holds.
By the same argument of the last part of the proof of Theorem \ref{thm:Galois correspondence1}, $\theta$ is a finite harmonic morphism of degree $\frac{|G|}{|G(\psi)|}$.
Here, $G(\psi)$ corresponds to $G_1$ and $G$ to $G_2$ in this case.
We define an isomertic action of $G / G(\psi)$ on $\Gamma^{\prime \prime}$ as $[g](x^{\prime \prime}) := \psi(g(x))$ for $[g] \in G / G(\psi)$, $x^{\prime \prime} \in \Gamma^{\prime \prime}$ and any $x \in \psi^{-1}(x^{\prime \prime})$.
It is well-defined.
In fact, if $x_1 \in \psi^{-1}(x^{\prime \prime})$, then there exists $g_1 \in G(\psi)$ such that $g_1 (x_1) = x$.
Since $G(\psi)$ is a normal subgroup of $G$, there exists $g_2 \in G(\psi)$ such that $g(x) = g g_1 (x_1) = g_2 g(x_1)$.
Thus we have $\psi(g(x)) = \psi(g_2g(x_1)) = \psi(g(x_1))$.
If $[g] = [\widetilde{g}]$, then there exists $g_1 \in G(\psi)$ such that $g = g_1 \widetilde{g}$.
Hence $\psi(g(x)) = \psi(g_1 \widetilde{g}(x)) = \psi (\widetilde{g}(x))$.
Since the action of $G$ on $\Gamma$ is Galois, there exists a finite subset $U^{\prime}$ of $\Gamma / G$ such that for any $x^{\prime} \in (\Gamma / G) \setminus U^{\prime}$, $|\pi^{-1}_G(x^{\prime})| = |G|$ holds.
Let $O$ be the finite subset $(\pi_{G / G(\psi)} \circ \psi)(\pi^{-1}_G(U^{\prime}))$ of $\Gamma^{\prime \prime} / (G / G(\psi))$, where $\pi_{G / G(\psi)} : \Gamma^{\prime \prime} \to \Gamma^{\prime \prime} / (G / G(\psi))$ is the natural surjection.
Let $[x^{\prime \prime}] \in (\Gamma^{\prime \prime} / (G / G(\psi))) \setminus O$.
We have $\pi^{-1}_{G / G(\psi)} ([x^{\prime \prime}]) = (G / G(\psi)) x^{\prime \prime}$.
Since $|\varphi^{-1}(\theta(x^{\prime \prime}))| = |G|$ and $\psi$ is $G(\psi)$-Galois, we have
\begin{align*}
|(G / G(\psi))x^{\prime \prime}| = |\psi(\varphi^{-1}(\theta(x^{\prime \prime})))| = \frac{|G|}{|G(\psi)|},
\end{align*}
and thus the action of $G / G(\psi)$ on $\Gamma^{\prime \prime}$ is Galois.
Let 
\begin{align*}
\phi : \Gamma^{\prime \prime} / (G / G(\psi)) \to \Gamma^{\prime}; \qquad \pi_{G / G(\psi)}(x^{\prime \prime}) \mapsto \theta(x^{\prime \prime}),
\end{align*}
where $x^{\prime \prime} \in \Gamma^{\prime \prime}$.
With any fixed element $x \in \psi^{-1}((G / G(\psi))x^{\prime \prime})$, since
\begin{align*}
\theta((G / G(\psi)) x^{\prime \prime}) = \theta(\psi(Gx)) = \theta(\psi(x)) = \varphi(x),
\end{align*}
we have $\theta(x^{\prime \prime}) = \varphi(x)$, and thus $\phi$ is well-defined.
By definition, for any $[g] \in G / G(\psi)$, $\phi \circ \pi_{G / G(\psi)} = \theta \circ [g]$ holds.
Since $\pi_{G / G(\psi)}$ and $\theta$ are local isometries and $[g]$ is an automorphism, $\phi$ is a local isometry.
When $\theta(x^{\prime \prime}) = \theta(y^{\prime \prime})$, with any $x \in \psi^{-1}((G / G(\psi)) x^{\prime \prime})$ and $y \in \psi^{-1}((G / G(\psi)) y^{\prime \prime})$, $(G / G(\psi))x^{\prime \prime} = \psi(Gx) = \psi(Gy) = (G / G(\psi))y^{\prime \prime}$ hold, and thus $\pi_{G / G(\psi)}(x^{\prime \prime}) = \pi_{G / G(\psi)}(y^{\prime \prime})$ holds.
This means that $\phi$ is injective and hence a finite harmonic morphism of degree one.
In conclusion, $\theta$ is $G / G(\psi)$-Galois.

We shall show the last assertion.
By (1), the quotient group $G / G(\psi)$ naturally acts on $\Gamma^{\prime \prime}$ as above.
Since each stabilizer subgroup of $G$ with respect to each point of $\Gamma$ except a finite number of points is trivial, the natural action is faithful, i.e., for any equivalence class $[g] \in G / G(\psi)$ other than $[1]$, there exists a point $x^{\prime \prime} \in \Gamma^{\prime \prime}$ such that $[g](x^{\prime \prime}) \not= x^{\prime \prime}$.
For any $g \in G$, $\psi \circ g = [g] \circ \psi$ and there exists $h \in H$ such that $\psi \circ g = h \circ \psi$.
Since the natural action above is faithful, if $[g] \not= [\widetilde{g}]$, then $[g] \circ \psi \not= [\widetilde{g}] \circ \psi$.
Thus, the map 
\begin{align*}
G / G(\psi) \to H; \qquad [g] \mapsto h
\end{align*}
is injective.
Since\begin{align*}
|H| = \operatorname{deg}(\theta) = \frac{|G|}{|G(\psi)|} = |G / G(\psi)|
\end{align*}
holds, this map is surjective.
By definition, it is a group homomorphism.
In conclusion, we have the desired group isomorphism.
\end{proof}

To remove the last conditions in (2) and (3) of Proposition \ref{prop:Galois correspondence1}, we peove the following lemma by using Proposition \ref{prop:prenormal}:

\begin{lemma}
	\label{lem:prenormal}
In the same setting in Proposition \ref{prop:Galois correspondence1}, the following are equivalent:

$(1)$ there exists a finite group $H$ isometrically acting on $\Gamma^{\prime \prime}$ such that $\theta$ is $H$-normal, and

$(2)$ there exists a finite group $H^{\prime}$ isometrically acting on $\Gamma^{\prime \prime}$ such that $\theta$ is $H^{\prime}$-normal and for any $g \in G$, there exists $h^{\prime} \in H^{\prime}$ satisfying $\psi \circ g = h^{\prime} \circ \psi$.
\end{lemma}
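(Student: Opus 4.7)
The direction (2) $\Rightarrow$ (1) is immediate: any $H^{\prime}$ witnessing (2) also witnesses (1) upon forgetting the intertwining condition.

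For (1) $\Rightarrow$ (2), my plan is to construct, for each $g \in G$, a global isometry $h^{\prime}_g$ of $\Gamma^{\prime \prime}$ satisfying $\psi \circ g = h^{\prime}_g \circ \psi$, and then assemble them into the required group. The tool is Proposition \ref{prop:prenormal} applied to the $H$-normal morphism $\theta$ and the finite harmonic morphism $\psi : \Gamma \to \Gamma^{\prime \prime}$: for each $g \in G$, since $\varphi \circ g = \varphi$ implies $\theta \circ (\psi \circ g) = \theta \circ \psi$, the proposition produces a loopless model $(V, E, l)$ for $\Gamma$ (which may in addition be chosen $G$-invariant by the usual subdivision argument) and, for every edge $e \in E$, an element $h_{e, g} \in H$ with $(\psi \circ g)|_{e} = (h_{e, g} \circ \psi)|_{e}$. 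I would then define $h^{\prime}_g$ on each edge image $\psi(e)$ by the map induced by $h_{e, g}$, extending continuously across the finitely many vertices and points at infinity.

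Granted the well-definedness of each $h^{\prime}_g$, the rest of the argument is routine. Let $H^{\prime}$ be the subgroup of $\operatorname{Isom}(\Gamma^{\prime \prime})$ generated by $H$ and $\{h^{\prime}_g : g \in G\}$; the map $g \mapsto h^{\prime}_g$ is a group homomorphism, so $H^{\prime}$ is finite. The $H^{\prime}$-action on $\Gamma^{\prime \prime}$ is Galois in the sense of Definition \ref{dfn:Galois1} because the locus of non-trivial stabilizers is contained in the union of the $H$-exceptional set and the $\psi$-image of the $G$-exceptional set, which is finite. The identity $\theta \circ h^{\prime}_g = \theta$ (obtained by composing $\theta \circ \psi \circ g = \theta \circ \psi$ with any local $\psi$-preimage) yields $H^{\prime}$-normality of $\theta$ via the factorization through $\pi_{H^{\prime}}$, and the intertwining condition of (2) holds by construction.

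The main obstacle is precisely the well-definedness of $h^{\prime}_g$. Two edges $e_1, e_2 \in E$ with $\psi(e_1) = \psi(e_2)$ are related by some $g_0 \in G(\psi)$, and the declared values $h_{e_1, g}|_{\psi(e_1)}$ and $h_{e_2, g}|_{\psi(e_2)}$ must coincide. Unwinding the definitions and using $\psi \circ g_0 = \psi$, this reduces to the identity $\psi(g(x)) = \psi(g g_0(x))$ for $x \in e_1$, equivalently $g g_0 g^{-1} \in G(\psi)$; so well-definedness of all the $h^{\prime}_g$ is equivalent to $G(\psi)$ being a normal subgroup of $G$. The technical heart of the lemma is therefore the implication "$\theta$ is $H$-normal $\Rightarrow$ $G(\psi)$ is normal in $G$", which I would extract by refining the loopless model so that every $\psi(e)$ contains an interior point with trivial $H$-stabilizer and then exploiting the resulting rigidity of the assignment $e \mapsto h_{e, g} \in H$ under the $G(\psi)$-action on $\Gamma$. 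Once normality is secured, one may in fact take $H^{\prime} = G / G(\psi)$ acting on $\Gamma^{\prime \prime}$ exactly as in the proof of Proposition \ref{prop:Galois correspondence1}, completing the argument.
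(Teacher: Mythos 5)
Your overall strategy coincides with the paper's: for each $g \in G$ one defines $h^{\prime}_g : \Gamma^{\prime\prime} \to \Gamma^{\prime\prime}$ by $\psi(x) \mapsto (\psi \circ g)(x)$, applies Proposition \ref{prop:prenormal} to the $H$-normal morphism $\theta$ and to $\psi : \Gamma \to \Gamma^{\prime\prime}$ to see that this is edge-wise of the form $h \circ \psi$ with $h \in H$, and then takes $H^{\prime}$ to be the group generated by the $h^{\prime}_g$. You have also correctly isolated the one genuinely delicate point, namely the well-definedness of $h^{\prime}_g$, and correctly observed that it is equivalent to $G(\psi)$ being a normal subgroup of $G$.

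The gap is that your proposed resolution of this point does not resolve it. Choosing in each edge $e$ an interior point $p$ with $\psi(p)$ of trivial $H$-stabilizer pins down $h_{e,g}$ uniquely as the element of $H$ sending $\psi(p)$ to $\psi(g(p))$; but what is needed is the compatibility $h_{e,g} = h_{g_0(e),g}$ for every $g_0 \in G(\psi)$, and unwinding this (exactly as you did when analyzing well-definedness) gives back $\psi(g(p)) = \psi(g(g_0(p)))$, i.e.\ $g g_0 g^{-1} \in G(\psi)$ --- the very statement to be proved. So the ``rigidity'' step is circular, and the implication ``$\theta$ is $H$-normal $\Rightarrow$ $G(\psi)$ is normal in $G$,'' which you yourself call the technical heart, is left unproved in your proposal. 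Note also that the paper's own argument does not route through normality of $G(\psi)$ at all: after constructing the $h^{\prime}_g$ it proves the orbit identity $H^{\prime}\psi(x) = H\psi(x)$ directly (the inclusion $\supset$ uses $\theta \circ h = \theta$ to place $h(\psi(x))$ inside $\psi(Gx) = H^{\prime}\psi(x)$) and transfers transitivity on the fibers of $\theta$ from $H$ to $H^{\prime}$, deducing $H^{\prime}$-normality without first knowing that $G(\psi)$ is normal; normality of $G(\psi)$ is then obtained only afterwards, via Proposition \ref{prop:Galois correspondence1}. If you wish to keep your endgame of taking $H^{\prime} = G / G(\psi)$, you must first supply an actual proof of the normality of $G(\psi)$, which your sketch does not yet contain.
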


\begin{proof}
(2) clearly implies (1).

Assume that (1) holds.
For any $g \in G$, we define $h^{\prime}_g : \Gamma^{\prime \prime} \to \Gamma^{\prime \prime}; \psi(x) \mapsto (\psi \circ g)(x)$, where $x \in \Gamma$.
Then, by Proposition \ref{prop:prenormal}, $h^{\prime}_g$ is a local isometry.
Since $\psi \circ g$ is surjective and continuous, $h^{\prime}_g$ is an automorphism of $\Gamma^{\prime \prime}$.
Set $H^{\prime}$ as the group generated by these $h^{\prime}_g$.
Then $H^{\prime}$ induces an isometric and faithful action on $\Gamma^{\prime}$.
For any $x \in \Gamma$, we check that $H^{\prime} \psi(x) = H \psi(x)$.
Since for any $g \in G$, there exists $h \in H$ such that 
\begin{align*}
(h^{\prime}_g \circ \psi)(x) = (\psi \circ g) (x) = (h \circ \psi)(x),
\end{align*}
we have $H^{\prime} \psi(x) \subset H \psi(x)$.
Conversely, for any $h \in H$, since
\begin{align*}
\varphi(\psi^{-1} ((h \circ \psi) (x)) ) = \theta ((h \circ \psi) (x)) = (\theta \circ \psi) (x) = \varphi (x),
\end{align*}
$\psi^{-1} ((h \circ \psi) (x))$ is contained in $Gx$.
Therefore, for any $\widetilde{x} \in \psi^{-1}((h \circ \psi)(x))$, there exists $g \in G$ such that $\widetilde{x} = g(x)$.
Thus, we have
\begin{align*}
(h \circ \psi)(x) = \psi(\widetilde{x}) = \psi(g(x)) = (\psi \circ g)(x) = (h^{\prime}_g \circ \psi) (x),
\end{align*}
and hence $H^{\prime} \psi(x) \supset H \psi(x)$.
From this, the action of $H^{\prime}$ on $\Gamma^{\prime}$ induces a transitive action on each fiber by the transitivity of the action of $H$ on each fiber.
In conclusion, $\theta$ is $H^{\prime}$-normal and satisfies that $\theta \circ \psi = \varphi$ and for any $g \in G$, there exists $h^{\prime}_g \in H^{\prime}$ such that $\psi \circ g = h^{\prime}_g \circ \psi$.
\end{proof}

By Proposition \ref{prop:Galois correspondence1} and Lemma \ref{lem:prenormal}, we prove theorem \ref{thm:intermediate1}.

Finally, we shall prove Theorem \ref{thm:UMP}.
This theorem means that all Galois coverings are categorical quotients.

\begin{proof}[Proof of Theorem \ref{thm:UMP}]
Since $\Gamma^{\prime}$ has the quotient topology, we have a unique continuous map $\theta$ satisfying $\psi = \theta \circ \varphi$, and thus it is enough to check that $\theta$ is a finite harmonic morphism.
Fix loopless models $(V, E, l), (V^{\prime}, E^{\prime}, l^{\prime})$ and $(V^{\prime \prime}, E^{\prime \prime}, l^{\prime \prime})$ for $\Gamma, \Gamma^{\prime}$ and $\Gamma^{\prime \prime}$, respectively, compatible with $\varphi$ and $\psi$.
We can choose such loopless models by the assumptions.
Let $e \in E$.
As for any $x, y \in e$, $\operatorname{dist}(x, y) = \operatorname{dist}(\varphi(x), \varphi(y))$ holds, we have $\operatorname{deg}_e(\psi) = \operatorname{deg}_{\varphi(e)}(\theta)$.
Therefore, $\theta$ is a finite morphism.
Let $v$ be in $V$ and fix an edge $e^{\prime \prime} \in E^{\prime \prime}$ incident to $\psi(v) = \theta(\varphi(v))$.
Then, since $\psi$ is a finite harmonic morphism, we have
\begin{align*}
\operatorname{deg}_v(\psi) &= \sum_{e \in E:\, e \mapsto e^{\prime \prime},\, v \in e} \operatorname{deg}_e(\psi)\\
&= \frac{\sum_{e \in E:\, \varphi(e) \mapsto e^{\prime \prime},\, \varphi(v) \in \varphi(e)} \operatorname{deg}_{\varphi(e)}(\theta)}{|Gv|}\\
&= \frac{\sum_{e^{\prime} \in E^{\prime}:\, e^{\prime} \mapsto e^{\prime \prime},\, \varphi(v) \in e^{\prime}} \operatorname{deg}_{e^{\prime}}(\theta) \cdot |\varphi^{-1}(e^{\prime})|}{|Gv|}\\
&= \frac{|G|}{|Gv|} \cdot \left( \sum_{e^{\prime} \in E^{\prime}:\, e^{\prime} \mapsto e^{\prime \prime},\, \varphi(v) \in e^{\prime}} \operatorname{deg}_{e^{\prime}} (\theta) \right).
\end{align*}
Thus we have
\begin{align*}
\sum_{e^{\prime} \in E^{\prime} :\, e^{\prime} \mapsto e^{\prime \prime},\, \varphi(v) \in e^{\prime}} \operatorname{deg}_{e^{\prime}} (\theta) = \operatorname{deg}_v (\psi) \cdot \frac{|Gv|}{|G|} = \frac{\operatorname{deg}_v (\psi)}{|G_v|}.
\end{align*}
It is independent of the choice of $e^{\prime \prime}$ and since $\operatorname{deg}_{e^{\prime}}(\theta) = \operatorname{deg}_e (\psi)$ with any $e \in E$ such that $\varphi(e) = e^{\prime}$, it is a positive integer.
These mean that $\theta$ is harmonic as $v$ is an element of $V$.
The uniqueness comes from the universal mapping property of quotient topology.
\end{proof}

\begin{rem}
	\label{rem:UMP}
\upshape{
In \cite{JuAe2}, the author gave another definition of Galois coverings on tropical curves.
With that definition, for a map between tropical curves $\varphi : \Gamma \to \Gamma^{\prime}$, if there exists an isometric action of a finite group $G$ on $\Gamma$ such that there exists a finite harmonic morphism of degree one $\theta : \Gamma / G \to \Gamma^{\prime}$ satisfying $\varphi \circ g = \theta \circ \pi_G$ for any $g \in G$, we call $\varphi$ a $G$-Galois covering on $\Gamma^{\prime}$.
In this paper, let us call it a \textit{$G$-preGalois covering} on $\Gamma^{\prime}$ or a \textit{preGalois covering} simply.
There exists a preGalois covering on a tropical curve for which the Galois correspondence and the universal mapping property do not hold.
See Examples \ref{ex:C}, \ref{ex:C2}, and \ref{ex:C1}.
By Examples \ref{ex:C2} and \ref{ex:C1}, we know that for a preGalois covering $\varphi : \Gamma \to \Gamma^{\prime}$, even there exists a model $(G, l)$ for $\Gamma$ such that the greatest common divisor of $\{ \operatorname{deg}_e(\varphi) \,|\, e \in E(G) \}$ is one, $\varphi$ may not be a Galois covering.
}
\end{rem}

\begin{figure}[h]
\begin{center}
\includegraphics[width=0.9\columnwidth]{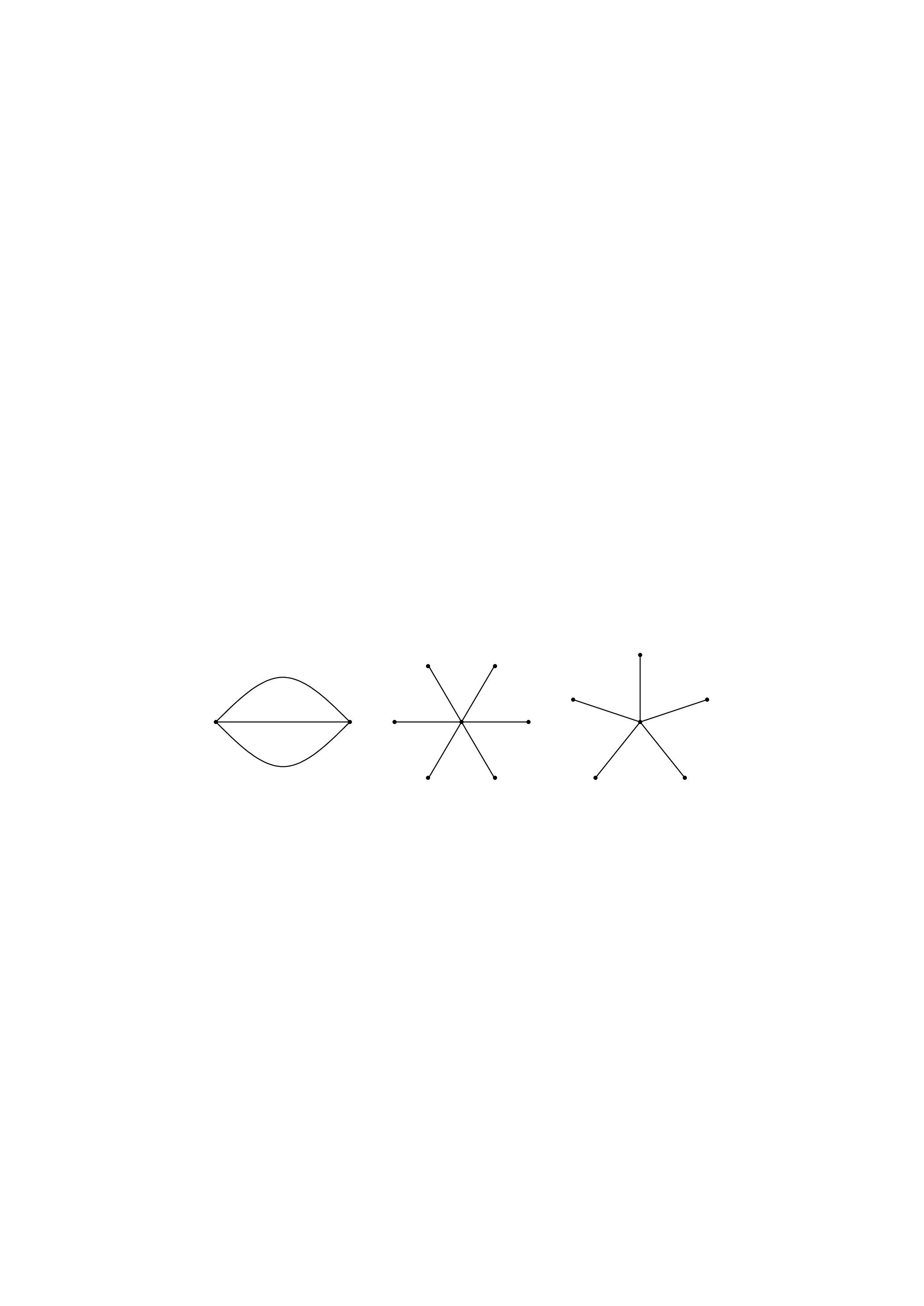}

\vspace{-1mm}
\caption{On each figure, black dots (resp. lines) stand for vertices (resp. edges).}
	\label{figure1}
\end{center}
\end{figure}

\begin{ex}
	\label{ex:C}
\upshape{
Let $G$ be the graph whose set of vertices consists of two vertices and whose set of edges consists of three multiple edges $\{e_1, e_2, e_3 \}$ between them (the left figure of Figure \ref{figure1}).
Let $l$ be the length function such that $l(E(G)) = \{ 1 \}$ and $\Gamma$ the tropical curve obtained from $(G, l)$.
Then, the symmetric group of degree three $\Sigma_3$ isometrically acts on $\Gamma$ in a natural way.
The quotient graph $G / \Sigma_3$ and the length function $E(G / \Sigma_3) \to \boldsymbol{R} \cup \{ \infty \}; [e_i] \mapsto 2$ give the quotient space $\Gamma / \Sigma_3$ a tropical curve structure.
Let $\Gamma^{\prime}$ stand for this quotient tropical curve.
Then, the natural surjection $\pi_{\Sigma_3} : \Gamma \to \Gamma^{\prime}$ is a $\Sigma_3$-preGalois covering on $\Gamma^{\prime}$.
Note that $\Gamma^{\prime}$ is isometric to the closed interval $[0, 2]$.

Let $\sigma$ be the cyclic permutation of multiple edges $(e_1 e_2 e_3)$.
For a finite harmonic morphism $\psi : \Gamma \to \Gamma^{\prime \prime}$, if $\psi \circ \sigma = \psi$, then for any $\beta \in \Sigma_3$, $\psi \circ \beta = \psi$ holds.
Hence, Theorem \ref{thm:Galois correspondence1} does not hold for $\pi_{\Sigma_3}$.

$\pi_{\Sigma_3}$ is not $\Sigma_3$-Galois and does not have a universal mapping property.
In fact, since $\operatorname{deg}_{e_i}(\pi_{\Sigma_3}) = 2$, $\pi_{\Sigma_3}$ is not $\Sigma_3$-Galois.
Since for any $\beta \in \Sigma_3$, $\pi_{\langle \sigma \rangle} \circ \beta = \pi_{\langle \sigma \rangle}$ holds, there exists a unique continuous map $\theta$ satisfying $\pi_{\langle \sigma \rangle} = \theta \circ \pi_{\Sigma_3}$.
Here, $\langle \sigma \rangle$ stands for the subgroup of $G$ generated by $\sigma$.
On the other hand, since $\operatorname{deg}(\pi_{\Sigma_3}) = 6$ and $\operatorname{deg}(\pi_{\langle \sigma \rangle}) = 3$, $\theta$ is not a finite harmonic morphism.
Hence, $\pi_{\Sigma_3}$ does not have a universal mapping property.
}
\end{ex}

\begin{ex}
	\label{ex:C2}
\upshape{
Let $e_1, \ldots, e_6$ be the edges of the star of six edges $S_6$, i.e., the complete bipartite graph $K_{1, 6}$ (the center figure of Figure \ref{figure1}).
Let $l$ be the length function such that $l(E(S_6)) = \{ 1 \}$ and $\Gamma$ be the tropical curve obtained from $(S_6, l)$.
Let $\sigma$ (resp. $\beta$) be the cyclic permutation $(e_1 e_2)$ (resp. $(e_3 e_4 e_5 e_6)$) and $G$ the finite group generated by $\sigma$ and $\beta$.
Then, the quotient graph $S_6 / G$ and the length function $E(S_6 / G) \to \boldsymbol{R} \cup \{\infty\}; [e_1] \mapsto 4; [e_3] \mapsto 2$ give the quotient space $\Gamma / G$ a tropical curve structure. 
Let $\Gamma^{\prime}$ stand for this quotient tropical curve. 
Then, the natural surjection $\pi_{G} : \Gamma \to \Gamma^{\prime}$ is a $G$-preGalois covering.

For a finite harmonic morphism $\psi : \Gamma \to \Gamma^{\prime \prime}$, if $\psi \circ (\sigma \beta^2) = \psi$, then $\psi \circ \beta^2 = \psi$ holds.
Hence, Theorem \ref{thm:Galois correspondence1} does not hold for $\pi_G$.

Let $H$ be the subgroup of $G$ generated by $\beta$.
Then, the natural surjection $\pi_H : \Gamma \to \Gamma / H$ is $H$-preGalois that is not $H$-Galois and does not have a universal mapping property.
In fact, since $\operatorname{deg}_{e_1}(\pi_H) = 4$, $\pi_H$ is not $H$-Galois.
Note that the quotient graph $S_6 / H$ is the tree consisting of three vertices and two edges $[e_1], [e_3]$, and $S_6 / H$ and the length function $E(S_6 / H) \to \boldsymbol{R} \cup \{\infty\}; [e_1] \mapsto 4; [e_3] \mapsto 1$ give the quotient space $\Gamma / H$ a tropical curve structure.
Let $\gamma$ be the cyclic permutation $(e_1 e_2 e_3 e_4 e_5 e_6)$.
Then, the quotient tropical curve $\Gamma / \langle \gamma \rangle$ is isometric to the closed interval $[0, 1]$.
Since $\pi_{\langle \gamma \rangle} \circ \beta = \pi_{\langle \gamma \rangle}$ holds, there exists a unique continuous map $\theta$ satisfying $\pi_{\langle \gamma \rangle} = \theta \circ \pi_H$.
On the other hand, since $\operatorname{deg}(\pi_H) = 4$ and $\operatorname{deg}(\pi_{\langle \gamma \rangle}) = 6$, $\theta$ is not a finite harmonic morphism.
Hence, $\pi_H$ does not have a universal mapping property.
}
\end{ex}

\begin{ex}
	\label{ex:C1}
\upshape{
Let $e_1, \ldots, e_5$ be the edges of the star of five edges $S_5$, i.e., the complete bipartite graph $K_{1, 5}$ (the right figure of Figure \ref{figure1}).
Let $l$ be the length function such that $l(E(S_5)) = \{ 1 \}$ and $\Gamma$ be the tropical curve obtained from $(S_5, l)$.
Let $\sigma$ (resp. $\beta$) be the cyclic permutation $(e_1 e_2)$ (resp. $(e_3 e_4 e_5)$) and $G$ the finite group generated by $\sigma$ and $\beta$.
Then, the quotient graph $S_5 / G$ and the length function $E(S_5 / G) \to \boldsymbol{R} \cup \{\infty\}; [e_1] \mapsto 3; [e_3] \mapsto 2$ give the quotient space $\Gamma / G$ a tropical curve structure. 
Let $\Gamma^{\prime}$ stand for this quotient tropical curve. 
Then, the natural surjection $\pi_{G} : \Gamma \to \Gamma^{\prime}$ is a $G$-preGalois covering.

$\pi_G$ is not $G$-Galois and does not have a universal mapping property.
In fact, since $\operatorname{deg}_{e_i}(\pi_G) \ge 2$, $\pi_G$ is not $G$-Galois.
Let $\Gamma^{\prime \prime}$ be the tropical curve isometric to the closed interval $[0, 1]$.
Note that $\Gamma^{\prime \prime}$ is the quotient tropical curve of $\Gamma$ by the natural action of the group generated by the cyclic permutation $\gamma := (e_1 e_2 e_3 e_4 e_5)$ on $\Gamma$.
Let $\pi_{\langle \gamma \rangle} : \Gamma \to \Gamma^{\prime \prime}$ be the natural surjection.
Since for any $\delta \in G$, $\pi_{\langle \gamma \rangle} \circ \delta = \pi_{\langle \gamma \rangle}$, there exists a unique continuous map $\theta$ satisfying $\pi_{\langle \gamma \rangle} = \theta \circ \pi_{G}$.
On the other hand, since $\operatorname{deg}(\pi_{\langle \gamma \rangle}) = 6$ and $\operatorname{deg}(\pi_{\langle \gamma \rangle}) = 5$, $\theta$ is not a finite harmonic morphism.
Hence, $\pi_{G}$ does not have a universal mapping property.
}
\end{ex}

Note that since Galois coverings are preGalois coverings, for our definition of Galois coverings, all corresponding assertions in \cite{JuAe2} hold and in addition, we need not consider tropical curves with edge-multiplicities.

\end{document}